\let\inf\relax \DeclareMathOperator*\inf{\vphantom{p}inf}
\let\max\relax \DeclareMathOperator*\max{\vphantom{p}max}
\let\min\relax \DeclareMathOperator*\min{\vphantom{p}min}
\numberwithin{equation}{section}
\newtheorem{theoreme}{Theorem}[section]
\newtheorem{proposition}[theoreme]{Proposition}
\newtheorem{corollaire}[theoreme]{Corollary}
\newtheorem{remarque}[theoreme]{Remark}
\newtheorem{lemme}[theoreme]{Lemma}
\newtheorem{definition}[theoreme]{Definition}
\newcommand{\RR}{\ensuremath{\mathbb R}}
\newcommand{\PP}{\ensuremath{\mathbb P}}
\newcommand{\EE}{\ensuremath{\mathbb E}}
\newcommand{\NN}{\ensuremath{\mathbb N}}
\newcommand{\D}{\ensuremath{\mathcal D}}
\newcommand{\ind}{\ensuremath{\mathds 1}}
\newcommand{\G}{\ensuremath{\mathcal G}}
\newcommand{\xx}{\ensuremath{\mathbf{x}}}
\newcommand{\De}{\Delta}
\newcommand{\la}{\lambda}
\newcommand{\de}{\delta}
\newcommand{\ep}{\varepsilon}
\newcommand{\si}{\sigma}
\newcommand{\ga}{\gamma}
\newcommand{\sh}{\ensuremath{\hat{s}}}
\newcommand{\two}{I\!\! I}
\title{Optimal strategies in zero-sum repeated games with incomplete information: the dependent case}
\author{\scshape\MakeUppercase 
Fabien Gensbittel\footnote{Toulouse School of Economics, University of Toulouse Capitole, Toulouse, France, email: \href{mailto:fabien.gensbittel@tse-fr.eu}{fabien.gensbittel@tse-fr.eu}}, 
\scshape\MakeUppercase Miquel Oliu-Barton\footnote{Universit\'e Paris-Dauphine, PSL Research University, CNRS, CEREMADE, 75016 Paris, France, email: \href{mailto:miquel.oliu.barton@normalesup.org}{miquel.oliu.barton@normalesup.org}}\\[1cm]
}
\date{\today}
\begin{document}
\maketitle 

\paragraph{Abstract.} 
Using the duality techniques introduced by De Meyer (1996a, 1996b), Rosenberg (1998) and De Meyer and Marino (2005) provided an explicit construction for optimal strategies in repeated games with incomplete information on both sides, in the independent case. In this note, we extend both the duality techniques and the construction of optimal strategies to the dependent case.

\setcounter{tocdepth}{2}
\setlength{\cftbeforesubsecskip}{-2ex}
\setlength{\cftbeforesecskip}{-2ex}

\section{Introduction}

We consider here a zero-sum repeated game with incomplete information on both sides, in the spirit of Aumann and Maschler \cite{AM95}. Let $K$ (resp. $L$) be the finite set of types of Player $1$ (resp. $2$), and let $\pi$ be a probability distribution over $K\times L$. To any pair $(k,\ell)$ corresponds a matrix game $G^{k\ell}:I\times J\to \RR$, where $I$ (resp. $J$) is the finite set of actions of Player $1$ (resp. $2$). The game is played as follows. First, a pair $(k,\ell)\in K\times L$ is drawn with the probability distribution $\pi$. 
Player $1$ (resp. $2$) is informed only of $k$ (resp. $\ell$). Then, the game $G^{k\ell}$ is played repeatedly. At each stage $m\geq 1$, the players choose actions $(i_m,j_m)\in I\times J$, which produces a stage-payoff $G^{k\ell}(i_m,j_m)$. Actions are publicly observed after each stage.
For any initial distribution $\pi$ and any sequence of non-negative weights $\theta=(\theta_m)_m$, we consider the game $\G_\theta(\pi)$ in which the overall payoff is the expected $\theta$-weighted sum of the stage-payoffs $\sum_{m\geq 1} \theta_m G^{k\ell}(i_m,j_m)$ and where $\pi$ stands for the probability distribution of $(k,\ell)$. This game has a value, denoted by $v_\theta(\pi)$. The particular case where, for some $n\in\NN^*$, one has $\theta_m=\frac{1}{n}\ind_{\{m\leq n\}}$ for all $m\geq 1$ corresponds to the classical $n$-stage repeated games. Similarly, the case where $\theta_m=\la (1-\la)^{m-1}$ for all $m\geq 1$ and some $\la \in(0,1]$ corresponds to $\la$-discounted repeated games. We use then the notation $\G_n(\pi)$ and $v_n(\pi)$, and $\G_\la(\pi)$ and $v_\la(\pi)$,  respectively.

This model was analyzed by Mertens and Zamir in \cite{MZ71}. Their main result was the existence of $v_n(\pi)$ and $v_\la(\pi)$ and their convergence (respectively, as $n$ goes to $+\infty$ and as $\la$ vanishes) to the unique solution of a system of functional equations. The proof of this result was based on the introduction of the specific notion of $I$-concavity 
for the value function $\pi\mapsto v_n(\pi)$, which can be described as follows. Any probability $\pi$ over the product set $K\times L$ can be decomposed as a pair $(p,Q)$ where $p$ is the marginal probability on $K$ and $Q$ is a matrix of conditional probabilities on $L$ given $k\in K$. This decomposition can be expressed 
 as $\pi=p\otimes Q$ where $\otimes$ denotes the direct product. 
One may then consider $v_{n}$ as a function of $(p,Q)$, and show that $p\mapsto v_n(p,Q):=v_n(p\otimes Q)$ is a concave function for any fixed $Q$.
 A dual notion of $\two$-convexity was also introduced and the notions of  $I$-concave and $\two$-convex envelopes were the building blocks of the system of functional equations characterizing the limit value. Based on this characterization, a construction of asymptotically optimal strategies (i.e. strategies being almost optimal in $\G_n(\pi)$, with an error term vanishing as the number of stages tends to $+\infty$) 
 was obtained by Heuer \cite{heuer92}. The convergence of the values $v_\theta(\pi)$ for a general evaluation, as $\max_{m\geq 1}\theta_m$ tends to $0$, and the construction of asymptotically optimal strategies in this case were obtained by Oliu-Barton \cite{OB_asym,OB17}.

In addition to their main result, Mertens and Zamir  \cite{MZ71} also established a recursive formula for $v_{n}(\pi)$ and $v_\la(\pi)$ in terms of the conditional probabilities on $K\times L$ induced by the players' strategies at the first stage, and the extension to a general evaluation $\theta$ is straightforward. 
Though very useful for studying the values, the formula cannot be used by the players 
for the simple reason that none of them can actually compute these conditional probabilities. There is, however, one important exception: games with incomplete information on one side. Indeed, when Player 2 has no private information, Player 1 controls and observes the conditional probabilities while Player 2 does not. As a consequence, the former, and not the latter, can 
use the recursive formula satisfied by the values to construct an optimal strategy. This game is denoted by $\G_\theta(p)$ where $p$ is the probability distribution of $k$. 
The dual game was introduced by De Meyer in \cite{dm96,dm96b} in order to construct an optimal strategy for Player 2. 
The idea of the dual game is to consider a game with vector payoffs: for each realised pair of actions $(i,j)$, the uninformed player knows the vector $(G^k(i,j))_{k\in K}$. Like in approachability theory, an optimal strategy for Player 2 is one that ensures that the $\theta$-weighted sum of payoffs lies in an appropriate subset of $\RR^K$. This set depends on a well-chosen dual variable $x\in \RR^K$ which replaces the unknown type of Player $1$ in the following sense: Player $2$ can choose his opponent's type to be $k$ at a cost $x^k$. 
De Meyer \cite{dm96} proves that the values of the dual game $w_\theta(x)$ satisfy a recursive formula in terms of the dual variable, and that Player $2$ can use this formula to construct an optimal strategy in the dual game. More importantly, this strategy is an optimal strategy in $\G_\theta(p)$ provided that $x$ belongs to the sub-differential of the concave function $p'\mapsto v_\theta(p')$ at $p$.

The duality techniques were extended by Rosenberg \cite{rosenberg98}, Sorin \cite{sorin02} and De Meyer and Marino \cite{DM05} for repeated games with incomplete information on both sides, in the special case of independent initial probabilities, i.e. $\pi=p\otimes q$, for some probabilities $p$ over $K$  and $q$ over $L$.
As both players are uninformed about their opponent's type, one needs to consider two dual games, one for each player. The first dual game is related to the Fenchel conjugate of the function $p' \rightarrow v_{\theta}(p',q):=v_\theta(p\otimes q)$, where $q$ is a fixed probability over $L$. Rosenberg \cite{rosenberg98} proved that its value $w_\theta(x,q)$ satisfies a recursive formula in terms of the dual variable $x$ and the conditional probabilities over $L$ induced by the strategy of Player 2. As these two variables are accessible to Player $2$, the latter can use this formula to construct an optimal strategy in the dual game, and this strategy is an optimal strategy in the game  
$\G_\theta(\pi)$ provided that $x$ belongs to the sub-differential of the concave function $p'\mapsto v_\theta(p' \otimes q)$ at $p$, where $p$ and $q$ are such that $\pi=p\otimes q$. An alternative formula with similar properties was obtained more recently by De Meyer and Marino \cite{DM05}, who also considered the case of infinite action spaces.
The second dual game is constructed in a symmetric manner, and provides an optimal strategy of Player $1$.  It is worth mentioning that, unlike the asymptotic results from Heuer \cite{heuer92} and Oliu-Barton \cite{OB_asym}, the constructions of Rosenberg \cite{rosenberg98} and De Meyer and Marino \cite{DM05} 
provide optimal strategies for repeated games with a fixed evaluation (namely, $n$-stage and $\la$-discounted games).

In the present paper, we extend the results from Rosenberg \cite{rosenberg98} to the so-called dependent case. That is, we provide a recursive formula for the values of the dual games, from which we deduce the computation of explicit optimal strategies for the players in the repeated game with incomplete information $\G_\theta(\pi)$, for any evaluation $\theta$ and any probability $\pi$ on $K\times L$. Our construction can be extended, word for word, to stochastic games with incomplete information, as long as the incomplete information concerns the payoff function, but not the transition function. 
 Extending the duality techniques to the dependent case was never done before; albeit not technically difficult, the extension requires some new ideas, such as considering an intermediate step: first, the type of one player is drawn according to the corresponding marginal law, and then the other type is drawn according to the conditional law. These considerations are crucial in the proof of our main result in order to prove the convexity of some auxiliary functions (see Remark \ref{rem_proof}). Besides, contrary to \cite{rosenberg98}, who considered $n$-stage games and $\la$-discounted games separately, and games with incomplete information on one and two sides separately too, we present here a unified approach.
Let us also point out that the approach proposed by De Meyer and Marino \cite{DM05}, which was designed to handle games with infinite action spaces, does not seem well-suited to analyze the dependent case. Indeed, when applied to this case, their method requires the introduction of an additional dual variable, and thus results in a substantially more complicated dual recursive formula (see \cite[Chapter 4]{OB13}).

\subsection{Main results} 
As both players have symmetric roles, we will only state our results on one side. Namely, we will focus on the optimal strategies of Player 2. 

In order to state our main results, we need the following notation:
\begin{itemize}
\item For a non-empty finite set $X$, $\Delta(X)$ denotes the set of probabilities over $X$, and is identified with the canonical simplex in $\RR^X$. 
\item $\NN^*$ denotes the set of positive integers, and $\De(\NN^*)$ is the set of nonnegative sequences $(a_m)_{m\in \NN^*}$ satisfying $\sum_{m\geq 1}a_m=1$. 
\item For any $\theta\in \De(\NN^*)$ satisfying $\theta_1<1$ we denote by $\theta^+\in \De(\NN^*)$ the sequence $(\frac{\theta_m}{1-\theta_1})_{m\geq 1}$. 
\item For any $p\in \De(K)$ and $Q\in \De(L)^K$ we denote by $p\otimes Q$ the probability on $K\times L$ induced by $p$ and $Q$, i.e. $(p\otimes Q)(k,\ell)=p^kQ(\ell\,|\,k)$ for all $(k,\ell)$. For any $\tau \in \Delta(J)^L$, we denote by $\PP^{pQ}_{\tau}$ the probability over $K\times L\times J$ induced by $(p,Q,\tau)$. For every $j \in J$, $\PP^{pQ}_{\tau}(j)$ denotes the marginal probability of $j$ and $Q_j$ the matrix of conditional probabilities over $L$ given $k \in K$, conditionally on $j$. 
\item For any $(k,i,Q,\tau) \in K\times I  \times \Delta(L)^K\times \Delta(J)^L$, we define 
\[ G^{kQ}_{i\tau}= \sum_{j \in J} \sum_{\ell \in L} G^{k\ell}_{ij}\tau^\ell(j)Q(\ell|k) .\]

\end{itemize}

For any evaluation $\theta$, any dual variable $x$ and any matrix of conditional probabilities $Q$, 
we denote by $w_\theta(x,Q)$ the value of the dual game corresponding to the game $\G_\theta(p,Q)$ from the perspective of Player 2. We can now state our main result.

\begin{theoreme}[Dual recursive formula]\label{thmmain} For all $(x,Q)\in \RR^K \times \De(L)^K$ and $\theta\in \De(\NN^*)$ one has:
\begin{eqnarray*}\label{eee2}
w_{\theta}(x,Q)&=&\min_{\tau\in \De(J)^L} \min_{(x_{ij})_{ij}\in (\RR^K)^{I\times J}} \max_{(k,i)\in K \times I} \\
&& \left\{ 
\theta_1 G^{kQ}_{i \tau}+(1-\theta_1)\sum\nolimits_{j\in J}\PP^{kQ}_{\tau}(j) \left( w_{\theta^+}(x_{ij},Q_j)
+ x_{ij}^k \right) -x^k \right\}.
\end{eqnarray*}
\end{theoreme}

\begin{corollaire}\label{cormain}Player $2$ can construct an optimal strategy in $\G_\theta(\pi)$ by using the \emph{dual recursive formula}, starting from an appropriate pair $(x,Q)$, namely $Q$ is the matrix of conditionals corresponding to $\pi$ and $x$ belongs to the sub-differential of $p'\mapsto v_\theta(p'\otimes Q)$ at $p$ such that $\pi=p\otimes Q$.
\end{corollaire}

\subsection{Outline of the paper} Section 2 is devoted to introduce the duality techniques in all its generality. In particular, we show how to deal with the dependent case. In Section 3 we introduce repeated games with incomplete information on both sides. Section 4 is devoted to prove our main results. 
In Section 5 we provide some comments on the extensions of our results to two classes of dyamics games, stochastic games and differential games.

\section{Duality techniques}\label{applications}
For any pair of sets $S$ and $T$ and any function $g:S\times T\to \RR$, we denote by $(S,T,g)$ the zero-sum game where $S$ is the set of strategies of Player $1$, $T$ is the set of strategies of Player $2$ and $g$ is the payoff.
The maxmin and minmax of $(S,T,g)$ are given by:
$$v^-:=\sup_{s\in S}\inf_{t\in T}g(s,t)\quad\text{ and }\quad v^+:=\inf_{t\in T}\sup_{s\in S}g(s,t),$$ 
and the game is said to have a value if $v^-=v^+$. 
 An $\ep$-optimal strategy for Player 1 is an element $s_\ep\in S$ satisfying 
$\inf_{t\in T}g(s_\ep,t)\geq v^- -\ep$. Similarly, $t_\ep\in T$ is an $\ep$-optimal strategy for Player $2$ if  $\sup_{s\in S}g(s,t_\ep)\leq v^+ +\ep$. Note that $\ep$-optimal strategies exist for all $\ep>0$ but not necessarily for $\ep=0$.

The aim of this section is to recall some properties of the \emph{dual game}, introduced by De Meyer in \cite{dm96,dm96b} to study repeated games with incomplete information on one side. We follow the presentation given in \cite[Chapter 2]{sorin02}. Throughout this section, $S$ and $T$ denote two convex sets, $K$ and $L$ are two finite sets and $G^{k\ell}:S\times T\to \RR$ is a payoff function for each $(k,\ell)\in K\times L$ that is bi-linear and bounded, i.e. $\sup_{s,t}|G^{k\ell}(s,t)|<+\infty$.

\subsection{Incomplete information on one side} \label{oneside}
Let us start by considering the case $L=\{\ell\}$, and set $G^{k}:=G^{k\ell}$ for all $k\in K$ to simplify the notation. 
To the collection of zero-sum games (henceforth, games) $\{(S,T,G^k),\ k\in K\}$, we associate two families of games, the so-called primal and dual games, parameterised in terms of a probability $p\in \De(K)$ and a vector $x\in \RR^K$, respectively. 
\paragraph{The primal game $\G(p)$.} 
To every probability distribution $p\in \De(K)$ corresponds a game with  incomplete information on one side, defined as follows:
\begin{itemize}
 \item Before the play, $k\in K$ is chosen according to $p$ and told to Player $1$. 
\item Then, the game $(S,T,G^{k})$ is played, i.e. Player 1 chooses $s\in S$, Player 2 chooses $t\in T$ (both choices being independent and simultaneous), 
and the payoff is $G^k(s,t)$. 
\end{itemize}

The set of strategies of Player $1$ is $S^K$, the set of strategies of Player $2$ is $T$, and the payoff function is given by:
$$\ga(p,\hat{s},t):=\sum_{k \in K}p^k G^k(\hat{s}^k,t).$$ 
The game $(S^K,T,\ga(p,\,\cdot))$ is denoted by $\G(p)$ and will be referred as the \emph{primal game}.
The {maxmin} and {minmax} 
of $\G(p)$ are given, respectively, by: 
\begin{eqnarray*}
v^-(p)& :=& \sup_{\hat{s}\in S^K}\inf_{t\in T}\ga(p,\hat{s},t),\\
v^+(p)&:=&\inf_{t\in T}\sup_{\hat{s}\in S^K}\ga(p,\hat{s},t).
\end{eqnarray*}
\emph{Concavity and continuity}: 
The maps $p\mapsto v^\pm(p)$ are concave and 
continuous on $\De(K)$.

\paragraph{The dual game $\D[\G](x)$.} 
To every vector $x\in \RR^K$ corresponds the dual game $\D[\G](x)$, 
a modified version of the primal game $\G(p)$ where Player 1 can \emph{choose} the parameter $k\in K$ at a cost $x^k$. Formally, the set of strategies of Player 1 in the dual game is $\De(K)\times S^K$, the set of strategies of Player 2 is $T$, and the payoff function is given by: 
$$h[x](p,\hat s,t):=\sum_{k\in K} p^k G^k(\hat{s}^k,t)-\langle p,x\rangle.$$ 
Let $w^-(x)$ and $w^+(x)$ denote, respectively, the {maxmin} and the {minmax}
of $\D[\G](x)$, i.e.
\begin{eqnarray*}
w^-(x)& :=& \sup_{(p,\hat{s})\in \De(K)\times S^K}\, \inf_{t\in T}\ h[x](p,\hat s,t),\\
w^+(x)&:=&\inf_{t\in T}\, \sup_{(p,\hat{s})\in \De(K)\times S^K}\ h[x](p,\hat{s},t).
\end{eqnarray*}
\emph{Convexity and continuity}: The maps $x\mapsto w^\pm(x)$ are convex and continuous on $\RR^K$.

The values of the primal game and the values of the dual game are essentially Fenchel conjugates from each other. To be more precise about the link between the functions $v^\pm$ and $w^\pm$, one needs to 
introduce two closely related convex transforms: the lower and the upper conjugates. 
\begin{definition} Let $f:\RR^K\to \overline\RR:=\RR\cup\{-\infty,+\infty\}$. 
Define its upper and lower conjugates $f^\sharp, f^\flat:\RR^K\to \overline\RR$ by:
\begin{eqnarray*}f^\sharp(x)&:=&
\sup_{y\in \RR^K}f(y)-\langle y, x\rangle,\\
f^\flat(y)&:=&
\inf_{x\in \RR^K} f(x)+\langle x,y \rangle.
\end{eqnarray*}
\end{definition}
\noindent Recall that the Fenchel conjugate of $f$ is given by $f^*(x)=\sup_{y\in \RR^K}\langle y, x\rangle-f(y)$ so that: 
\[f^\sharp(x)=(-f)^*(-x), \quad \text{ and }\quad f^\flat(y)=-f^*(-y),\quad \forall x,y\in \RR^K.\]
\noindent Recall also the usual definition of the superdifferential of $f$ at $x$: 
\[\partial^+ f(x):= \{y \in \RR^K \, | \, f(x)+\langle y, x'-x\rangle \geq f(x'), \ \forall x'\in \RR^K\}.\] 
\begin{theoreme}\label{th1} For all $(x,p)\in \RR^K\times \De(K)$ one has:
$$w^{\pm}(x)=(v^\pm)^\sharp(x)\quad \text{ and } \quad 
 v^{\pm}(p)=(w^\pm)^\flat(p)$$
 where the functions $v^\pm$ are extended to $\RR^K$ by $-\infty$ in $\RR^K\backslash \De(K)$.
 \end{theoreme} 
\begin{corollaire}\label{th2} Let $(x,p)\in \RR^K\times \De(K)$ be such that $x\in \partial^+ v^+(p)$. Then, any $\ep$-optimal strategy $t_\ep\in T$ of Player 2 in $\D[\G](x)$ is also an $\ep$-optimal strategy of Player 2 in $\G(p)$. 
\end{corollaire}

\subsection{Incomplete information on both sides} \label{dualide}
Consider now the general case where $K$ and $L$ may contain more than one element. 
To the collection of games $\{(S,T,G^{k\ell}),\ (k,\ell)\in K\times L\}$ one can associate a family of games with incomplete information on both sides, as before. 

\paragraph{The primal game $\G(\pi)$.} For any $\pi\in \De(K\times L)$, consider the following primal game, denoted by $\G(\pi)$: 
\begin{itemize}
 \item Before the play, a couple $(k,\ell)\in K\times L$ is chosen according to $\pi$, Player 1 is informed of $k$  and Player 2 is informed of $\ell$.
\item Then, the game $(S,T,G^{k\ell})$ is played, i.e. Player 1 chooses $s\in S$, Player 2 chooses $t\in T$ (both choices being independent and simultaneous), 
and the payoff is $G^{k\ell}(s,t)$.
\end{itemize}
In normal-form, $\G(\pi)$ is given by a triplet $(S^K,T^L,\ga(\pi,\,\cdot\,))$ where: 
\[\ga(\pi,\hat{s},\hat{t}):=\sum_{(k,\ell) \in K\times L} \pi^{k \ell} G^{k\ell} (\hat{s}^k,\hat{t}^\ell).\]
Let $v^\pm(\pi)$ denote the maxmin and minmax of $\G(\pi)$. 

In order to apply the duality techniques described above, one needs to reformulate the primal game $\G(\pi)$ in a slightly different way. Let $p\in \De(K)$ and $Q\in \De(L)^K$ denote, respectively, the marginal of $\pi$ on $K$ and its matrix of conditional probabilities, so that:
$$\pi=p\otimes Q,\quad \left(\text{ i.e. } \pi^{k\ell}=p^kQ(\ell\,|\,k),\quad \forall (k,\ell)\in K\times L\right)$$
From the perspective of Player 2, the game with incomplete information on both sides $\G(\pi)$, can be seen as a game with incomplete information on one side, where Player 1 is the informed player.
\paragraph{The first primal game $\G_Q(p)$.} Let $Q\in \De(L)^K$ be fixed. For any $p\in \De(K)$, consider the following game:
\begin{itemize}
 \item Before the play, $k\in K$ is chosen according to $p$ and told to Player $1$. 
\item Then, the game $(S,T^L,G_Q^{k})$ is played, i.e. Player 1 chooses $s\in S$, Player 2 chooses $t\in T^L$ (both choices being independent and simultaneous), and the payoff is:
$$ G_Q^k(s,\hat{t}):=\sum_{\ell \in L}Q(\ell |k) G^{k\ell}(s, \hat{t}^\ell).$$
\end{itemize}
The sets of strategies are thus
  $S^K$ and $T^L$ and the payoff function is given by: $$\ga(p,\hat{s},\hat{t}):=\sum_{k \in K}p^k G_Q^k(\hat{s}^k,\hat{t}).$$ 
The maxmin and minmax are denoted respectively by $v_Q^-(p)$ and $v_Q^+(p)$, and the maps $v_Q^\pm(p)$ are concave and continuous. \\

The sets $S$ and $T^L$ are convex and $G^k_Q$ is bi-linear and bounded for all $k\in K$, so that a dual game can be defined, like in Section \ref{oneside}. 
 
\paragraph{The first dual game $\D[\G_Q](x)$.} As before, to every $x\in \RR^K$ corresponds a dual game. The sets of strategies are $\De(K)\times S^K$ and $T^L$ and the payoff function is given by:
$$h_Q[x](p,\hat{s},\hat{t}):=\sum_{k\in K} p^k G_Q^k(\hat{s}^k,\hat{t})-\langle p,x\rangle.$$ 
The maxmin and minmax functions $x\mapsto w_Q^\pm(x)$ are convex and continuous.

Theorem \ref{th1} and Corollary \ref{th2} can thus be restated accordingly. 

\begin{theoreme}\label{ddu} 
For each $(p,x)\in \De(K)\times \RR^K$ and $Q\in \De(L)^K$ one has:
$$w_Q^{\pm}(x)=(v_Q^\pm)^\sharp(x)\quad \text{ and } \quad 
 v_Q^{\pm}(p)=(w_Q^\pm)^\flat(p).$$
\end{theoreme} 
\begin{corollaire}\label{corodu} Let $(p,Q)\in \De(K)\times \De(L)^K$ and $\pi:=p\otimes Q$. 
Let $x\in \partial^+ v_Q^+(p)$, and let $\hat{t}_\ep\in T^L$ be an $\ep$-optimal strategy of Player 2 in $\D[\G_Q](x)$. Then, $\hat{t}_\ep$ is $\ep$-optimal for Player 2 in $\G(\pi)$.
\end{corollaire}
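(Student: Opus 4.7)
The plan is a direct reduction to the one-sided duality corollary recalled at the end of Section \ref{dual1}. The idea is that, for $\pi$ of product form $\pi = p\otimes Q$, the two-sided game $\G(\pi)$ is literally the same object as the one-sided intermediate game $\G_Q(p)$ built in Section \ref{inter}, so no new duality argument is needed at all.

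First I would verify the identification $\G(\pi) = \G_Q(p)$ explicitly. In both games the strategy sets are $S^K$ for player $1$ and $T^L$ for player $2$, and for any $(\hat s,\hat t)\in S^K\times T^L$ the two payoff functions agree because
\[
\sum_{k\in K} p^k\!\sum_{\ell\in L} Q(\ell|k)\, G^{k\ell}(\hat s^k,\hat t^\ell)
= \sum_{(k,\ell)\in K\times L}\!\pi^{k\ell}\, G^{k\ell}(\hat s^k,\hat t^\ell).
\]
In particular, $\ep$-optimality of a strategy for player $2$ in $\G_Q(p)$ is the same thing as $\ep$-optimality of that strategy in $\G(\pi)$.

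Second, I would recall (this was essentially already checked in Section \ref{inter}) that the triple $(S,\,T^L,\,\{G_Q^k : k\in K\})$ satisfies the abstract assumptions of Section \ref{oneside}: each $G_Q^k$ is bi-linear, $\sup_{k,s,\hat t}|G_Q^k(s,\hat t)|\leq\|G\|<\infty$, and $T^L$ is a convex set playing the role of ``actions'' of player $2$. Hence $\G_Q(p)$ fits the one-sided primal framework of Section \ref{prim} and $\D[\G_Q](x)$ fits the one-sided dual framework of Section \ref{dual}, with $v_Q^+$ and $w_Q^+$ playing the roles of $v^+$ and $w^+$.

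Finally, I would invoke the corollary of Section \ref{dual1} applied to this one-sided model: since $x\in \partial^+ v_Q^+(p)$ and $\hat t_\ep\in T^L$ is $\ep$-optimal for player $2$ in $\D[\G_Q](x)$, that corollary gives that $\hat t_\ep$ is $\ep$-optimal for player $2$ in $\G_Q(p)$, and by the first step $\hat t_\ep$ is then $\ep$-optimal for player $2$ in $\G(\pi)$. There is essentially no obstacle; the only point requiring any verification is the check that the abstract hypotheses of Section \ref{oneside} transfer to the triple $(S,T^L,\{G_Q^k\})$, which is routine. The real content of the statement lies in the duality work already accomplished in Theorem \ref{ddu} and in the construction of $\G_Q(p)$; the corollary is just the combination of these with the one-sided result.
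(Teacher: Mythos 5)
Your proposal is correct and follows exactly the paper's own route: the paper likewise observes that $\G(\pi)$ and $\G_Q(p)$ coincide when $\pi=p\otimes Q$ and then invokes the one-sided corollary of Section \ref{dual1} applied to the intermediate game $\G_Q(p)$ and its dual $\D[\G_Q](x)$. Your explicit verification of the payoff identification and of the hypotheses of Section \ref{oneside} only spells out what the paper leaves implicit.
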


\paragraph{The second primal game $\G_P(q)$ and the second dual game $\D[\G_P](y)$.} 
In games with incomplete information on both sides, the two players have similar roles. 
Thus, by expressing the primal game $\G(\pi)$ from the perspective of Player 1, one similarly defines a primal game $\G_P(q)$ for Player 1 and the corresponding  dual game $\D[G_P](y)$, for all $(q,P)\in \De(L)\times \De(K)^L$ and $y\in \RR^L$. Analogue versions of Theorem \ref{ddu} and Corollary \ref{corodu} can thus be obtained.

\section{Repeated games with incomplete information}\label{repgames}
\subsection{Preliminaries}
Let $K,L,I,J$ be finite sets. For any $(k,\ell)\in K\times L$, let $G^{k\ell}=(G^{k\ell}(i,j))_{(i,j)\in I\times J}$ be an $I\times J$ matrix. 
A repeated game with incomplete information is described by the finite collection of matrix games 
$\{G^{k\ell}, \, (k,\ell) \in K\times L\}$ and a probability $\pi\in \De(K\times L)$. It is played as follows:
\begin{itemize}
\item   A pair of parameters $(k,\ell)\in K\times L$ is drawn according to $\pi\in \De(K\times L)$.
Player $1$ is informed of $k$, Player 2 is informed of $\ell$. 
\item 
Then, the game $G^{k\ell}$ is played repeatedly: at each stage $m\geq 1$, knowing the past actions, the players choose $(i_m,j_m)\in I\times J$ and a stage-payoff $G^{k\ell}(i_m,j_m)$ is produced (though not observed).
\end{itemize} 
The payoff of Player 1 is the expectation of
$\sum_{m\geq 1}\theta_m G^{k\ell}(i_m,j_m)$, for some given $\theta\in\De(\NN^*)$ that is known to both players. The payoff of Player 2 is the opposite amount.

We denote this game by $\G_\theta(\pi)$.

\paragraph{Strategies.} For each $m\geq 1$, let $\mathcal{H}_m:=(I\times J)^{m-1}$. The information available to the players at stage $m\geq 1$ is given by $(k,h_m)$ and $(\ell,h_m)$ for some $(k,\ell,h_m)\in K\times L\times \mathcal{H}_m$. Thus, a strategy of Player 1 is a function $\sh:K\times \mathcal{H}\to \De(I)$, where 
$\mathcal{H}:=\bigcup_{m\geq 1} \mathcal{H}_m$, and a strategy of Player 2 is a function $\hat{t}:L\times \mathcal{H}\to \De(J)$.
The set of strategies are denoted, respectively, by $S^K$ and $T^L$. 

\paragraph{Payoff and values.} A pair of strategies $(\sh,\hat{t})\in S^K\times T^L$ and an initial probability  $\pi\in \De(K\times L)$ induce a unique probability over $K\times L \times (I\times J)^{\NN^*}$ on the $\si$-algebra generated by the cylinders, denoted by 
$\PP^\pi_{\sh,\hat{t}}$. One can then write the game $\G_\theta(\pi)$ in normal-form, i.e. $\G_\theta(\pi)=(S^K,T^L,\ga_\theta(\pi,\,\cdot\,))$ where: 
\[ \ga_\theta(\pi,\sh,\hat{t}):=\EE_{\sh,\hat{t}}^\pi \left[\sum\nolimits_{m\geq 1}\theta_m G^{k\ell}(i_m,j_m)\right],\]
and where $\EE^\pi_{\sh,\hat{t}}$ is the expectation with respect to  $\PP^\pi_{\sh,\hat{t}}$. \\

The following result is well-known \footnote{One may apply Sion's minmax theorem to the game in mixed strategies when pure strategies are endowed with the product topology, and then apply Kuhn's theorem to deduce the result. See e.g. chapter 3 and appendix A in \cite{sorin02} where the same method is applied in the discounted case.}, and we omit its proof. 

\begin{lemme}\label{val} 
For any $\pi\in\De(K\times L)$, the game $\G_\theta(\pi)$ has a value, i.e. 
$$v_\theta(\pi):= \sup_{\sh\in S^K} \inf_{\hat{t}\in T^L}\ga_\theta(\pi,\sh,\hat{t})=\inf_{\hat{t}\in T^L}\sup_{\sh\in S^K} \ga_\theta(\pi,\sh,\hat{t}).$$
Moreover, both players have $0$-optimal strategies.
\end{lemme}

The aim of this paper is to provide an explicit construction for a couple of $0$-optimal strategies. Recall that, as the two players have similar roles, we will focus on the construction for Player 2 only. For this reason, from now on the function  $v_\theta:\De(K\times L)\to \RR$ will be expressed in the following equivalent manner:
$$v_\theta(p,Q):=v_\theta(p\otimes Q), \quad \forall (p,Q)\in \De(K)\times \De(L)^K,$$
which is more convenient for studying the game from the perspective of Player 2. 
Let us start by recalling an important result, the so-called \emph{primal recursive formula}, which expresses the values of the repeated game with incomplete information $v_\theta(p,Q)$ in terms of the values of the \emph{continuation game}, that is, the sub-game that the players are facing after the first stage.

\subsection{Primal recursive formula}\label{rec_f}
Like in Section \ref{dualide}, for each $(p,Q)\in \De(K)\times \De(L)^K$, let $\G_\theta(p,Q)$ denote the repeated game of incomplete information on both sides $\G_\theta(\pi)$, expressed from the point of view of Player 2. 

The aim of this section is to provide a recursive formula satisfied by the values $v_\theta(p,Q)$. 
The following specific notation will be used to express this result. 
\begin{itemize}
\item $(\si,\tau)\in \De(I)^K\times \De(J)^L$ denotes a pair of strategies for the first stage of the game. 
\item For any $(p,Q)\in \De(K)\times \De(L)^K$, $(\si,\tau)\in \De(I)^K\times \De(J)^L$ and $(i,j) \in I\times J$
define $p_{ij}\in \De(K)$ and $Q_{j}\in \De(L)^K$ by setting, for all $(k,\ell)$: 
\begin{equation*}\label{pQdefs} 
p^k_{ij}:=\PP^{pQ}_{\si\tau}(k\, |\, i,j) \quad \text{ and }\quad Q_j(\ell\,|\,k):=\PP^{pQ}_{\si\tau}(\ell\,|\,k,j),
\end{equation*} 
where $\PP_{\si\tau}^{pQ}$ is the probability over $K\times L\times I\times J$ induced by $(\si,\tau,p,Q)$.
\end{itemize}

The following easy result is important, as in particular it shows that 
Player 2 can compute (and, in fact, controls) the matrix of conditional probabilities $Q_j\in \De(L)^K$ for all $j\in J$. 

\begin{lemme}\label{pij} For all $(k,\ell,i,j)$ one has:
$$ Q_{j}(\ell\,|\,k)= \frac{Q(\ell|k)\tau^\ell(j)}{\sum\nolimits_{\ell'\in L} Q(\ell'| k)\tau^{\ell'}(j)}\quad \text{ and }\quad  \PP^{pQ}_{\si\tau}(k,\ell\,|\,i,j)=p_{ij}^k Q_j(\ell\,|\,k).
$$
\end{lemme}

\begin{proof} For any $(k,j)$ such that $\PP_{\si\tau}^{pQ} (k,j)>0$, a direct computation gives: 
\[\PP_{\si\tau}^{pQ} (\ell\,|\,k,j)=\frac{p^k Q(\ell|k)\tau^\ell(j)}{p^k \sum_{\ell'\in L}Q(\ell'|k)\tau^{\ell'}(j)}=\frac{Q(\ell|k)\tau^\ell(j)}{\sum_{\ell'\in L}Q(\ell'|k)\tau^{\ell'}(j)}.\]
so that the first relation holds. Similarly, for any $(k,i,j)$ such that $\PP_{\si\tau}^{pQ} (k,i,j)>0$ one obtains: $$\PP_{\si\tau}^{pQ} (\ell\,|\,k,i,j)=\PP_{\si\tau}^{pQ} (\ell\,|\,k,j).$$ 
For any $(i,j)$ such that $\PP_{\si\tau}^{pQ} (i,j)>0$, 
disintegration gives then the second relation: 
\[\PP^{pQ}_{\si\tau}(k,\ell\,|\,i,j)=\PP_{\si\tau}^{pQ} (k\,|\, i,j)\PP_{\si\tau}^{pQ} (\ell\,|\,k,i,j)=p^k_{ij}Q_j(\ell\,|\,k)
.\]
\end{proof} 

We are now ready to state the so-called primal recursive formula, due to 
Mertens and Zamir \cite[Section 3]{MZ71}. For convenience, we provide a direct and shorter proof here. 
\begin{proposition}[Primal recursive formula]\label{recprim} 
For any $(p,Q)\in \De(K)\times \De(L)^K$ and 
 $\theta\in \De(\NN^*)$: 
\begin{eqnarray*} 
v_\theta(p,Q)&=& \max_{\si\in \De(I)^K}\; \min_{\tau\in \De(J)^L}\; \left\{
\theta_1G^{pQ}_{\si\tau}+(1-\theta_1)\sum_{(i,j)\in I\times J}\PP^{pQ}_{\si\tau}(i,j)v_{\theta^+}(p_{ij}, Q_j)\right\},\\
&=& \min_{\tau\in \De(J)^L}\;\max_{\si\in \De(I)^K}\; \left\{
\theta_1G^{pQ}_{\si\tau}+(1-\theta_1)\sum_{(i,j)\in I\times J}\PP^{pQ}_{\si\tau}(i,j)v_{\theta^+}(p_{ij}, Q_j)\right\}.
\end{eqnarray*}
\end{proposition}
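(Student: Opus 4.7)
The plan is to combine three ingredients: the existence of the value $v_\theta(\pi)$ from Lemma~\ref{val}, the recursive payoff decomposition~(\ref{recpayoff}), and the trivial inequality $\sup\inf \leq \inf\sup$. This bypasses any direct appeal to a minimax theorem on the auxiliary one-shot game with payoff
\[F(\si,\tau) := \theta_1 G^\pi_{\si\tau} + (1-\theta_1)\sum_{(i,j)\in I\times J} \PP^\pi_{\si\tau}(i,j)\, v_{\theta^+}(\pi_{ij}),\]
and in particular avoids verifying concavity of $F$ in $\si$ and convexity in $\tau$, which is what will make the argument noticeably shorter than the original one in \cite{MZ71}. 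Setting $V_- := \sup_\si \inf_\tau F(\si,\tau)$ and $V_+ := \inf_\tau \sup_\si F(\si,\tau)$, my aim is to prove
\[v_\theta(\pi) \leq V_- \quad \text{and} \quad v_\theta(\pi) \geq V_+;\]
combined with the trivial $V_- \leq V_+$ these force $V_- = V_+ = v_\theta(\pi)$, which is precisely the content of the proposition.

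For the upper bound, I would fix $\ep > 0$ and pick an $\ep$-optimal strategy $\sh^* \in S^K$ of player~$1$ in $\G_\theta(\pi)$ (which exists by Lemma~\ref{val}), with first-stage marginal $\si^* := \sh^*(h_1)$. For each $\tau \in \De(J)^L$, I would then construct a specific reply $\hat{t}_\tau \in T^L$: play $\tau$ at stage~$1$ and, after each observed pair $(i,j)$, play an $\ep$-optimal strategy in the continuation game $\G_{\theta^+}(\pi_{ij})$, where $\pi_{ij}$ denotes the posterior on $K\times L$ induced by $(\si^*,\tau)$. Such continuation strategies exist again by Lemma~\ref{val}. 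Substituting into~(\ref{recpayoff}) and using the $\ep$-optimality of the continuations would yield
\[\ga_\theta(\pi, \sh^*, \hat{t}_\tau) \;\leq\; F(\si^*, \tau) + (1-\theta_1)\ep.\]
Taking infimum in $\tau$ gives $\inf_{\hat{t}} \ga_\theta(\pi, \sh^*, \hat{t}) \leq \inf_\tau F(\si^*, \tau) + (1-\theta_1)\ep \leq V_- + (1-\theta_1)\ep$, and the $\ep$-optimality of $\sh^*$ together with $\ep \to 0$ then delivers $v_\theta(\pi) \leq V_-$.

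The inequality $v_\theta(\pi) \geq V_+$ follows by a completely symmetric construction on player~$2$'s side: starting from an $\ep$-optimal $\hat{t}^* \in T^L$ with first-stage $\tau^*$, for each $\si \in \De(I)^K$ I would define $\sh_\si$ to play $\si$ at stage~$1$ and then respond $\ep$-optimally in each $\G_{\theta^+}(\pi_{ij})$, where $\pi_{ij}$ is now induced by $(\si,\tau^*)$. Equation~(\ref{recpayoff}) yields $\ga_\theta(\pi, \sh_\si, \hat{t}^*) \geq F(\si, \tau^*) - (1-\theta_1)\ep$, and taking $\sup_\si$ combined with $\ep$-optimality of $\hat{t}^*$ produces $v_\theta(\pi) \geq V_+$ as $\ep \to 0$.

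The only conceptual obstacle is the apparent dependence of the posterior $\pi_{ij}$ on the opponent's first-stage mixed action, but this poses no issue here because the outer player's strategy ($\sh^*$ in the upper-bound argument, $\hat{t}^*$ in the lower-bound one) is already fixed at the moment the inner minimizing/maximizing strategy is being designed, so the posterior $\pi_{ij}$ is a well-defined element of $\De(K\times L)$ to which one can apply Lemma~\ref{val}. Once this is noted, everything reduces to~(\ref{recpayoff}) and Lemma~\ref{val}; no concavity, convexity, splitting or minimax argument is required.
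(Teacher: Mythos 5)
Your proof is correct and follows essentially the same route as the paper's: both rest on the payoff decomposition \eqref{recpayoff}, the existence of (near-)optimal continuation strategies supplied by Lemma~\ref{val}, and the sandwich $v_\theta(\pi)\leq \sup_\si\inf_\tau F\leq \inf_\tau\sup_\si F\leq v_\theta(\pi)$. The only difference is cosmetic: you fix an $\ep$-optimal first-stage strategy and use $\ep$-optimal continuation strategies where the paper quantifies over all strategies and invokes exact best replies in the continuation games --- if anything a slightly more careful variant, since exact best replies need not exist here.
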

\begin{proof}
Consider the maxmin. Let $\sh$ be a strategy of Player 1. At the first stage, the information available to Player 1 is $k$, so that $\si:=(\sh(k))_k \in \De(I)^K$ represents the strategy of Player 1 at the first stage. Similarly, the information available to Player 1 at the second stage is $(k,i,j)$ for some couple of actions $(i,j)$ played at the first stage, so that $\sh^+=(\sh(k,i,j))_{k,i,j}$ represents the strategy of Player 1 at the second stage. One can then write $\sh=(\si,\sh^+)$. Similarly, a strategy $\hat{t}$ of Player 2 can be written as $\hat{t}=(\tau,\hat{t}^+)$ where $\tau \in \De(J)^L$ and $\hat{t}^+=(\hat{t}(\ell,i,j))_{\ell,i,j}$.

For each $(i,j)$, let $\hat{t}^+_{ij}$ be a best reply of Player 2 to the strategy $\sh^+_{ij}:=(\sh(k,i,j))_{k}$ in the so-called continuation game, i.e.  
 $\G_{\theta^+}(p_{ij},Q_j)$. Then, for all $(\si,\tau)\in \De(I)^K\times \De(J)^L$ one has:
\begin{equation*}
\ga_\theta\left(p\otimes Q,\sh,\hat{t}\right)\leq \theta_1 G^{pQ}_{\si\tau} + (1-\theta_1)\sum\nolimits_{(i,j)\in I\times J}\PP^{pQ}_{\si\tau}(i,j)v_{\theta^+}(p_{ij},Q_j).
\end{equation*}
Player $1$ can still maximize over his own first-stage strategy, and Player 2 can again play a best reply. Hence:
\begin{eqnarray*}\label{rec-}
v_\theta(p,Q)&=&\max_{(\si,\sh^+)}\min_{(\tau,\hat{t}^+)} \ga_\theta(p\otimes Q,\sh,\hat{t}),\\ 
&\leq & \max_{\si\in \De(I)^K}\min_{\tau\in \De(J)^L} \left \{\theta_1 G^{pQ}_{\si\tau} + (1-\theta_1)\sum\nolimits_{(i,j)\in I\times J}\PP^{pQ}_{\si\tau}(i,j)v_{\theta^+}(p_{ij},Q_j)\right\}.
\end{eqnarray*}
Reversing the roles of the players one obtains, symmetrically:
\begin{equation}\label{rec+}
v_\theta(p,Q)\geq \min_{\tau\in \De(J)^L}\max_{\si\in \De(I)^K}\left\{\theta_1 G^{pQ}_{\si\tau}+(1-\theta_1)\sum\nolimits_{(i,j)\in I\times J}\PP^{pQ}_{\si\tau}(i,j)v_{\theta^+}(p_{ij},Q_j)\right\}.
\end{equation}
The result follows then from the inequality ``maxmin $\leq$ minmax''.
\end{proof}

\paragraph{Comments.} Proposition \ref{recprim} provides a recursive formula satisfied by the values, from the perspective of Player 2. Similarly, one can obtain a recursive formula satisfied by the values from the perspective of Player 1, expressing $v_\theta(q,P):=v_\theta(q\otimes P)$ in terms of $v_{\theta^+}(q_{ij},P_i)$ for any $(q,P)\in \De(L)\times \De(K)^L$ and suitably defined conditional probabilities $(q_{ij},P_i)\in \De(L)\times \De(K)^L$ for all $(i,j)$. However, neither of these recursive formulae can be used by the players as both probability distributions $p_{ij}$ and $q_{ij}$ depend 
on the first-stage strategies of the two players. The situation contrasts with the case of repeated games with incomplete information on one side (that is, when $L$ is a singleton), where Player 1 observes and controls the conditional probability $p_i\in \De(K)$, so that the primal recursive formula 
provides an explicit and recursive manner to obtain an optimal strategy for Player 1  (see \cite[Section 3]{sorin02}).

\begin{remarque} The sequence of weights $\theta^+$ is not defined when $\theta_1=1$, so that neither is the value function $v_{\theta^+}$. This, however, does not matter as the primal formula contains the term $(1-\theta_1)v_{\theta^+}$, which is $0$.\end{remarque}

\section{Dual recursive formula}\label{dualrep}
The aim of this section is to prove the \emph{dual recursive formula}, stated in Theorem \ref{thmmain}, and to deduce  an explicit construction of an optimal strategy for Player 2 in $\G_\theta(\pi)$.

\subsection{The dual game}
Consider the game $\G_\theta(\pi)$ described in Section \ref{repgames} from the point of view of Player 2. For any fixed matrix of conditionals $Q\in\De(L)^K$ and any $k\in K$, consider the collection of games $\{\G_\theta(\de_k\otimes Q), k\in K\}$, where $\de_k\in \De(K)$ is the Dirac mass, i.e. $\de_k(k)=1$ and $\de_{k}(k')=0$ for all $k'\neq k$.  
Let $S$ and $T^L$ denote, respectively, the common sets of strategies of Player 1 and 2 in each of these games. 
These sets are convex and the payoff functions $(s,\hat{t})\mapsto \ga_\theta(\de_k\otimes Q,s,\hat{t})$ are bi-linear and bounded. Hence, like in Section \ref{applications}, one can define the corresponding dual game $\D[\G_\theta](x,Q)$ for Player 2, for any $x\in \RR^K$. 

\paragraph{The dual game $\D[\G_\theta](x,Q)$.} By construction, the sets of strategies of this game 
 are $\De(K)\times S^K$ and $T^L$, and the payoff function is  
given by:
\[h_\theta[x,Q](p,\sh,\hat{t})=\ga_\theta\left(p\otimes Q,\sh,\hat{t}\right)-\langle p, x\rangle.\]
\textcolor{black}{By Lemma \ref{val} and Theorem \ref{ddu}, this game has a value $w_{\theta}(x,Q)$, i.e.
\begin{eqnarray*}
 w_{\theta}(x,Q)&=&\max_{(p,\sh)\in \De(K)\times S^K} \min_{\hat{t}\in T^L} h_\theta[x,Q](p,\sh,\hat{t}),\\
 &=&\min_{\hat{t}\in T^L} \max_{(p,\sh)\in \De(K)\times S^K}h_\theta[x,Q](p,\sh,\hat{t}).
\end{eqnarray*}}
and the mappings $x\mapsto w_\theta(x,Q)$ are convex and continuous.\\

The following notation will be used in the proof of the dual recursive formula:

\begin{itemize}
\item $\PP^Q_{\mu\tau}$ is the unique probability on $K\times L\times I\times J$ induced by $\mu\in \De(K\times I)$, $Q\in \De(L)^K$ and $\tau\in\De(J)^L$, i.e. $\PP^Q_{\mu\tau}(k,\ell,i,j)=\mu(k,i)Q(\ell\,|\,k)\tau^\ell(j)$.
\item $G^{Q}_{\mu\tau}$ denotes the expectation of $G^{k\ell}(i,j)$ with respect to the probability $\PP_{\mu\tau}^{Q}$. Note that $G^{kQ}_{i\tau}$ stands for $G^{Q}_{\mu\tau}$ for the particular case where $\mu=\de_{(k,i)}$ for some $(k,i)\in K\times I$.
\item $\PP^{kQ}_{\tau}(j):= \sum_{\ell \in L} Q(\ell |k) \tau^\ell(j)$ for any 
$k\in K$, $Q\in \De(L)^K$ and $\tau\in \De(J)^L$. 
\item $\|G\|:=\max_{k,\ell,i,j} |G^{k\ell}(i,j)|$.
\item $B$ denotes the following set $\{ x\in \RR^K \,|\, \|x\|_\infty \leq \|G\|\}$. 
 \end{itemize}

\subsection{Proof of Theorem \ref{thmmain}}

Let us recall the statement of the theorem:\\

 \emph{For all $(x,Q)\in \RR^K \times \De(L)^K$ and $\theta\in \De(\NN^*)$ one has: 
\begin{eqnarray*}
w_{\theta}(x,Q)&=&\min_{\substack{\tau\in \De(J)^L\\ (x_{ij})_{ij}\in (\RR^K)^{I\times J}}} \max_{(k,i)\in K \times I} \left\{ 
\theta_1 G^{kQ}_{i \tau}+(1-\theta_1)\sum\nolimits_{j\in J}\PP^{kQ}_{\tau}(j) \left( w_{\theta^+}(x_{ij},Q_j)
+ x_{ij}^k \right) -x^k \right\}.
\end{eqnarray*}}
Furthermore, we will prove that the minimum in $(x_{ij})_{ij}$ is reached in the set $B^{I\times J}$.

\begin{remarque} Again, there is no need in defining the value function $w_{\theta^+}$ when $\theta_1=1$, since in this case the term $(1-\theta_1)w_{\theta^+}$ is equal to $0$ by convention.\end{remarque}

\begin{proof}On the one hand, by Proposition \ref{recprim} one has the primal recursive formula:
$$v_\theta(p, Q)=\max_{\si\in \De(I)^K}\min_{\tau\in \De(J)^L}\left\{ 
\theta_1G^{pQ}_{\si\tau}+(1-\theta_1)\sum_{(i,j)\in I\times J}\PP^{pQ}_{\si\tau}(i,j)v_{\theta^+}(p_{ij}, Q_j)\right\}, $$
where $p_{ij}\in \De(K)$ and $Q_j\in \De(L)^K$ are defined in \eqref{pQdefs} and where, by Lemma \ref{pij}, 
$Q_j$ does not depend on $(p,\si)$. 
On the other hand, by the duality results presented in Section \ref{applications}, namely Theorem \ref{ddu}, one has:  
$$w_\theta(x,Q)=\max_{p\in \De(K)}v_\theta(p, Q)-\langle p,x\rangle.$$
Replacing $v_\theta(p, Q)$ by its expression in the primal recursive formula one obtains: 
\begin{eqnarray*} w_\theta(x,Q)&=&\max_{p\in \De(K)}\left\{ \max_{\si\in \De(I)^K}\min_{\tau\in \De(J)^L}\left\{ 
\theta_1G^{pQ}_{\si\tau}+(1-\theta_1)\sum_{(i,j)\in I\times J}\PP^{pQ}_{\si\tau}(i,j)v_{\theta^+}(p_{ij}, Q_j)\right\}-\langle p,x\rangle\right\},\\ 
&=& \max_{\mu\in \De(K\times I)} \min_{\tau\in \De(J)^L} \left\{ 
\theta_1G^{Q}_{\mu \tau}+(1-\theta_1)\sum_{(i,j)\in I\times J}\PP^{Q}_{\mu \tau}(i,j)v_{\theta^+}(\mu_{ij}, Q_j)-\langle \mu^K, x\rangle \right\}.
\end{eqnarray*}
where 
$\mu^K$ is the marginal of $\mu$ on $K$ and $\mu_{ij}$ is the conditional on $K$ given $(i,j)$, i.e. 
$$\mu^k_{ij}=\PP^{Q}_{\mu\tau}(k\,|\,i,j),\quad \forall (k,i,j).$$
Consider the one-shot game with action sets $\De(K\times I)$ and $\De(J)^L$ and payoff function:
$$F[\theta,x,Q](\mu,\tau):=\theta_1G^{Q}_{\mu \tau}+(1-\theta_1)\sum_{(i,j)\in I\times J}\PP^{Q}_{\mu\tau}(i,j)v_{\theta^+}(\mu_{ij}, Q_j)-\langle \mu^K, x\rangle.$$
Clearly, $F[\theta,x,Q]$ is continuous on the compact set $\De(K\times I)\times \De(J)^L$, its first and last terms are linear in $\mu$ and $\tau$ and, as we have already shown, one has:
 $$w_\theta(x,Q)= \max_{\mu\in \De(K\times I)} \min_{\tau\in \De(J)^L} F[\theta,x,Q](\mu,\tau).$$
Therefore, one can apply Sion's minmax theorem \cite{sion58} as soon as we prove that the following function is concave-convex: 
\begin{eqnarray*}f(\mu,\tau)&:=&\sum_{(i,j)\in I\times J}\PP^{Q}_{\mu\tau}(i,j)v_{\theta^+}(\mu_{ij}, Q_j).
\end{eqnarray*}
First, let us recall that from Theorem \ref{ddu}, the following  relation holds for all $(i,j)$: 
\begin{equation}\label{eqfenchel}
v_{\theta^+}(\mu_{ij}, Q_j) = \inf_{x_{ij} \in \RR^K} w_{\theta^+}(x_{ij},Q_j) + \langle \mu_{ij}, x_{ij} \rangle
\end{equation} 
Recall that any concave function $\varphi$ on $\Delta(K)$ which is $\|G\|$-Lipschitz with respect to the norm $\|.\|_1$ can be extended to a concave Lipschitz function $\tilde \varphi$ on the whole space $\RR^K$ having the same Lipschitz constant, by defining $\tilde \varphi(x)=\sup_{y \in \RR^K} \{ \varphi(y)- \|G\|\|y-x\|_1 \}$. By construction, $\tilde \varphi$ admits super-gradients at every point, which belong to the compact convex set $B:=\{x\in \RR^K\,|\, \|x\|_\infty\leq \|G\|\}$. Since $\varphi=\tilde\varphi$ on $\Delta(K)$, for all $p\in \Delta(K)$, the super-gradients of $\tilde\varphi$ at $p$ are super-gradients of $\varphi$ at $p$, and therefore $\varphi$ admits super-gradients in $B$ at every point of $\Delta(K)$.  

According to Fenchel's lemma, the set of  minimisers of the right-hand side of \eqref{eqfenchel} is exactly the set of super-gradients of the concave mapping $p'\mapsto v_{\theta^+}(p',Q_j)$ at $p'=\mu_{ij}$. Since this mapping is $\|G\|$-Lipschitz with respect to the norm $\|.\|_1$ on its domain $\Delta(K)$, it admits a super-gradient in the compact convex set $B$ at $\mu_{ij}$, which is therefore a minimiser of the right-hand side of \eqref{eqfenchel}.
We deduce that
\begin{equation}\label{eqfenchel2}
v_{\theta^+}(\mu_{ij}, Q_j) = \min_{x_{ij} \in B} w_{\theta^+}(x_{ij},Q_j) + \langle \mu_{ij}, x_{ij} \rangle
\end{equation} 

Hence, replacing this expression, one can write: 
\begin{align*}
f(\mu,\tau)& = \inf_{(x_{ij})_{ij} \in B^{I\times J}} \sum_{(i,j)\in I\times J}\PP^{Q}_{\mu\tau}(i,j) \left(w_{\theta^+}(x_{ij},Q_j) + \langle \mu_{ij}, x_{ij} \rangle \right), \\
&=\inf_{(x_{ij})_{ij} \in B^{I\times J}} \left( \sum_{(i,j)\in I\times J}\PP^{Q}_{\mu\tau}(i,j)w_{\theta^+}(x_{ij},Q_j) + \sum_{(i,j,k)\in I\times J\times K} \PP^{Q}_{\mu\tau}(i,j) \mu_{ij}^k x_{ij}^k \right), \\
&=\inf_{(x_{ij})_{ij} \in B^{I\times J}} \left( \sum_{(i,j)\in I\times J}\PP^{Q}_{\mu\tau}(i,j)w_{\theta^+}(x_{ij},Q_j) + \sum_{(i,j,k)\in I\times J\times K} \PP^{Q}_{\mu\tau}(i,j,k)x_{ij}^k \right).
\end{align*}
Since $\mu \rightarrow  \PP^{Q}_{\mu\tau}$ is affine, we deduce from the above expression that $\mu \rightarrow f(\mu,\tau)$ is concave as an infimum of affine functions.

To prove the convexity of $\tau\mapsto f(\mu,\tau)$ we will consider the primal game $\G_\theta(\pi)$ from the point of view of Player 1. For any $q\in \De(L)$ and $P\in \De(K)^L$, denote this game by $\G_\theta(q,P)$ and let $v_\theta(q,P):=v_\theta(q\otimes P)$ denote its value. Using this notation, for each $(i,j)$ one can write: 
$$v_{\theta^+}(\mu_{ij}, Q_j)={v}_{\theta^+}(q_{ij}, P_i),$$
where $q_{ij}\in \De(L)$ and $P_i\in \De(K)^L$. In particular, $P_i$ is independent from $j$ and $\tau$ (just like $Q_j$ does not depend neither on $i$ nor $\si$). Explicitly, for all $(k,\ell,i,j)$ one has:
$$q_{ij}^\ell= \PP^Q_{\mu\tau}(\ell\,|\,i,j)\quad \text{ and }\quad  P_{i}(k\,|\,\ell)= \PP^Q_{\mu\tau}(k\,|\,\ell, i)=
\frac{\mu(k,i)Q(\ell\,|\,k)}{\sum_{k'\in K} \mu(k',i)Q(\ell\,|\,k')}.$$ 
Use the duality techniques of Section \ref{applications} to define, for each $y\in \RR^L$, the dual game $\D[\G_\theta](y,P)$, and denote its value by $w_\theta(y,P):=\sup_{q\in \De(L)}v_\theta(q,P)-\langle q,y\rangle$.
For each $(i,j)$ one then has:
\[v_{\theta^+}(q_{ij}\otimes P_i) = \sup_{y_{ij} \in \RR^L} {w}_{\theta^+}(y_{ij},P_i) -\langle q_{ij}, y_{ij} \rangle ,\]
so that: 
\begin{align*}
f(\mu,\tau)& = \sup_{(y_{ij})_{ij} \in (\RR^L)^{I\times J}} \sum_{(i,j)\in I\times J}\PP^{Q}_{\mu\tau}(i,j) \left({w}_{\theta^+}(y_{ij},P_i) -\langle q_{ij}, y_{ij} \rangle\right), \\
&= \sup_{(y_{ij})_{ij} \in (\RR^L)^{I\times J}} \left( \sum_{(i,j)\in I\times J}\PP^{Q}_{\mu\tau}(i,j){w}_{\theta^+}(y_{ij},P_i) - \sum_{(i,j,\ell)\in I\times J\times L} \PP^{Q}_{\mu\tau}(i,j) q_{ij}^\ell y_{ij}^\ell \right), \\
&=\sup_{(y_{ij})_{ij} \in (\RR^L)^{I\times J}} \left( \sum_{(i,j)\in I\times J}\PP^{Q}_{\mu\tau}(i,j){w}_{\theta^+}(y_{ij},P_i) - \sum_{(i,j,\ell)\in I\times J\times L} \PP^{Q}_{\mu\tau}(i,j,\ell)y_{ij}^\ell \right).
\end{align*}
The mappings $\tau \rightarrow  \PP^{Q}_{\mu\tau}(i,j)$ and $\tau \rightarrow  \PP^{Q}_{\mu\tau}(i,j,\ell)$ being affine, the previous expression shows that $\tau \rightarrow f(\mu,\tau)$ is convex, as a supremum of affine functions.

\noindent Therefore, one can indeed apply Sion's minmax theorem. Exchanging the maximum and the minimum one obtains:
 $$w_\theta(x,Q)= \displaystyle  \min_{\tau\in \De(J)^L} \max_{\mu\in \De(K\times I)}F[\theta,x,Q](\mu,\tau).$$ 
Replacing now $v_{\theta^+}(\mu_{ij}, Q_j)$ with $\min_{x_{ij}\in B} w_{\theta^+}(x_{ij},Q_j)+\langle \mu_{ij},x_{ij}\rangle$ gives then: 
\begin{align*} 
w_\theta(x,Q)&=  
 \min_{\tau\in \De(J)^L} \max_{\mu\in \De(K\times I)}\min_{(x_{ij})_{ij}\in B^{I\times J}} \\
&  \left\{ 
\theta_1G^{Q}_{\mu \tau}+(1-\theta_1)\sum\nolimits_{(i,j)\in I\times J}\PP^{Q}_{\mu \tau}(i,j)\left(w_{\theta^+}(x_{ij},Q_j)+\langle \mu_{ij},x_{ij}\rangle\right) -\langle \mu^K, x\rangle \right\}.\end{align*}
Again, in order to apply Sion's minmax theorem to exchange the order of the maximum and the infimum 
one needs to check that the mapping $(\mu,\xx)\mapsto g(\mu,\xx)$ is concave-convex, where $\xx:=(x_{ij})_{ij}\in B^{I\times J}$ and: 
\begin{align*}
g(\mu,\xx):&=\sum_{(i,j)\in I\times J}\PP^{Q}_{\mu \tau}(i,j) \left(w_{\theta^+}(x_{ij},Q_j)+\langle \mu_{ij},x_{ij}\rangle\right), \\
&=\sum_{(i,j)\in I\times J}\PP^{Q}_{\mu \tau}(i,j)w_{\theta^+}(x_{ij},Q_j)+ \sum_{(i,j,k)\in I\times J\times K} \PP^{Q}_{\mu \tau}(i,j,k) x_{ij}^k. 
\end{align*}
This property follows from the fact that the mappings $\mu \rightarrow   \PP^{Q}_{\mu \tau}(i,j)$ and $\mu \rightarrow   \PP^{Q}_{\mu \tau}(i,j,k)$ are affine, and that the map $x \rightarrow w_{\theta^+}(x,Q_j)$ is convex. 
We thus obtain:
\begin{align*} 
w_\theta(x,Q) &=  
 \min_{\tau\in \De(J)^L} \min_{(x_{ij})_{ij}\in B^{I\times J}}\max_{\mu\in \De(K\times I)} 
 \\
&  \left\{ 
\theta_1G^{Q}_{\mu \tau}+(1-\theta_1)\sum_{i,j}\PP^{Q}_{\mu \tau}(i,j) w_{\theta^+}(x_{ij},Q_j)
+(1-\theta_1) \sum_{i,k} \mu(i,k) \sum_{j} \PP^Q_{\mu \tau}(j|i,k) x_{ij}^k  -\langle \mu^K, x\rangle \right\}.
\end{align*}
Since the expression above is affine with respect to $\mu$, we can consider without loss of generality
the maxima at extreme points:
\begin{align*} 
w_\theta(x,Q) &=  
 \min_{\tau\in \De(J)^L} \min_{(x_{ij})_{ij}\in B^{I\times J}}\max_{(k,i) \in K\times I} 
 \\
&  \left\{ 
\theta_1G^{kQ}_{i \tau}+(1-\theta_1)\sum\nolimits_{j}\PP^{kQ}_{\tau}(j) \left( w_{\theta^+}(x_{ij},Q_j)
+ x_{ij}^k \right) -x^k \right\}.
\end{align*}
To conclude, note that the minimum over $B^{I\times J}$ can be replaced by a minimum over $(\RR^K)^{I\times J}$ since the above proof is still  valid if we replace $v_{\theta^+}(\mu_{ij}, Q_j)$ by the expression given by \eqref{eqfenchel} instead of the one given by \eqref{eqfenchel2}.
\end{proof}

\begin{remarque}\label{rem_proof} 
The proof of Theorem \ref{thmmain} follows the main lines of \cite{rosenberg98}, 
but there is a crucial point where an obstacle arises, namely in proving that the function 
$$f(\mu,\tau):=\sum_{(i,j)\in I\times J}\PP^{Q}_{\mu\tau}(i,j)v_{\theta^+}(\mu_{ij}, Q_j)$$
is concave-convex. Unlike the independent case, where the proof relies deeply on the fact 
that $(p,q)\mapsto v_{\theta^+}(p,q)$ is a concave-convex functions of independent probabilities $p\in \De(K)$ and $q\in \De(L)$,  the arguments of $v_{\theta^+}(\mu_{ij}, Q_j)$ are not independent from each other, and one thus needs to use the duality techniques for the dependent case, which are more sophisticated.
In this point, our proof diverges from the one in \cite{rosenberg98}.
\end{remarque}

\subsection{Construction of an optimal Markovian strategy}\label{markov}
In this section, we deduce from Theorem \ref{thmmain} and Corollary \ref{corodu} the construction of an optimal strategy for Player 2 in the game $\G_\theta(\pi)$. The strategy is Markovian, in the sense that it depends on the past history only through the (updated) variable $(x,Q,\theta)$.

Let $(p,Q)\in\De(K)\times \De(L)^K$ be such that $\pi=p\otimes Q$.
Let $x\in \partial^+ v_{\theta}(p,Q)$ be a super-gradient at $p$ of the mapping $v_\theta(\, \cdot \, , Q):\De(K)\to \RR$.

For all $(\theta',x',Q') \in \Delta(\NN^*)\times \RR^K\times \De(L)^K$, let us denote by $S_{\theta'}(x',Q')$ the set of minimizers in the dual recursive formula, i.e.
\[ S_{\theta'}(x',Q')=\text{argmin}_{(\tau,\xx) \in \De(J)^L \times \RR^{K\times I\times J}}\; h(\theta',x',Q')[\tau,\xx],\]
where $\xx=(x_{ij})_{ij}$ and where, using the notation of the previous sections:
\[ h(\theta',x',Q')[\tau,\xx]=\max_{(k,i) \in K\times I} \left\{ 
\theta'_1G^{kQ'}_{i \tau}+(1-\theta'_1)\sum\nolimits_{j\in J}\PP^{kQ'}_{\tau}(j) \left( w_{(\theta')^+}(x_{ij},Q'_j)
+ x_{ij}^k \right) -(x')^k \right\}.\]
An optimal strategy for Player 2 in $\G_\theta(\pi)$ can be constructed, recursively, as follows.\\

\noindent \textbf{Case 1.} If $\theta_1=1$, play $\tau\in \De(J)^L$ which is optimal in the formula:
$$w_1(x,Q)=\min_{\tau\in \De(J)^L} \max_{(k,i)\in K \times I} \left\{ 
\theta_1 G^{kQ}_{i \tau}-x^k \right\}.
$$
\noindent \textbf{Case 2.} If $\theta_1<1$, plays as follows:
\begin{itemize}
\item[-] Compute $(\tau, \xx) \in S_\theta(x,Q)$ optimal in the dual recursive formula at $(x,Q,\theta)$. 
\item[-] Choose $j\in J$ with probability $\tau^\ell$, where 
$\ell\in L$ is Player 2's private type.
\item[-] Observe $i\in I$ and update the triplet $(x,Q,\theta)$ to $(x_{ij},Q_j,\theta^+)$. 
\item[-] Play optimally in $\D[\G_{\theta^+}](x_{ij}, Q_j)$.
\end{itemize}
This strategy is optimal in the dual game $\D[\G_{\theta}](x, Q)$, thanks to Theorem \ref{thmmain}. 
By Corollary \ref{corodu} and the choice of $(x,Q)$, the strategy is also optimal in the game $\G_\theta(\pi)$. 
Furthermore, it is \emph{Markovian}. Indeed, at every stage $m\geq 1$, the mixed action of Player $2$  at this stage $\tau_m \in \De(J)^L$ depends only on a triplet of variables $(x^{(m-1)},Q^{(m-1)},\theta^{(m-1)})\in \RR^K\times \De(L)^K\times \De(\NN^*)$
constructed recursively as follows. For $m=0$ set: 
$$\begin{cases}
x^{(0)}:=x\\
Q^{(0)}:=Q\\
\theta^{(0)}:=\theta\, . \end{cases}
$$
For all $m\geq 1$, if $\theta^{(m-1)}_1<1$, compute $(\tau_m,\xx^{(m)})\in S_{\theta^{(m-1)}}(x^{(m-1)},Q^{(m-1)})$ 
and set:
$$\begin{cases}
x^{(m)}:=(\xx^{(m)})_{i_m j_m}\\
Q^{(m)}(\ell|k)= \frac{Q^{(m-1)}(\ell|k)\tau_m^\ell(j_m)}{\sum\nolimits_{\ell'\in L} Q^{(m-1)}(\ell'| k)\tau_m^{\ell'}(j_m)}, & \forall (k,\ell)\in K\times L,\\ 
\theta^{(m)}_t: =\frac{\theta_{m+t}}{\sum_{t' > m}\theta_{t'}}, & \forall t\geq 1, 
\end{cases}$$
where $(i_m,j_m)$ is the pair of actions played at stage $m$.

\section{Comments and Extensions}

\subsection{Stochastic games with incomplete information}
Consider a game with incomplete information over a finite set of states $S$, where each state represents a different state of the world. Formally, let $G^{k\ell}:S\times I\times J\to \RR$ denote a payoff function for each pair of types $(k,\ell)\in K\times L$,  depending not only on the players' actions, but also on the state, and let $\rho:S\times I\times J\to \De(S)$ denote a transition kernel. For any $\theta\in \De(\NN^*)$, any $\pi\in \De(K\times L)$ and 
any initial state $s_1\in S$, the \emph{stochastic game with incomplete information on both sides}, denoted by $\G_\theta(\pi; s_1)$,
is played as follows: 
\begin{itemize}
\item First, a pair of parameters $(k,\ell)\in K\times L$ is drawn according to $\pi\in \De(K\times L)$.
Player $1$ is informed of $k$, Player 2 is informed of $\ell$. 
\item At each stage $m\geq 1$, knowing the current state $s_m\in S$  and knowing the past actions, the players choose actions $(i_m,j_m)\in I\times J$. A stage-payoff $G^{k\ell}(s_m,i_m,j_m)$ is produced (though not observed) and a new state $s_{m+1}\in S$ is drawn with the probability distribution $\rho(s_m,i_m,j_m)$. 
\end{itemize} 
The payoff of Player 1 is the expectation of
$\sum_{m\geq 1}\theta_m G^{k\ell}(s_m,i_m,j_m)$, while the payoff of Player 2 is the opposite amount. 

\begin{remarque} The case where the set of states $S=\{s\}$ is a singleton corresponds to repeated games with incomplete information on both sides. In this sense, stochastic games with incomplete information extend our previous model.  
\end{remarque}

Let $p\in \De(K)$ and $Q\in\De(L)^K$ be such that $\pi=p\times Q$ and let $s\in S$ be an initial state. The existence of the value for stochastic games with incomplete information is well-known, and we omit its proof.  
Let $v_\theta(\pi,s)$ and $w_\theta(x,Q,s)$ denote, respectively, the values of $\G_\theta(\pi; s)$ and of the corresponding first dual game. The dual recursive formula obtained in Theorem \ref{thmmain} can be extended, word for word, to stochastic games with incomplete information on both sides in the dependent case. 

\paragraph{Notation.} In the following result we use the notations introduced earlier. Moreover, for each $(Q,\tau)\in \De(L)^K\times \De(J)^L$ and $(s,k,i)\in S\times K\times I$, we set:
$$G^{kQ}_{i \tau}(s)=\sum_{(\ell,j)\in L\times J}Q(\ell|k)\tau^\ell(j)G^{k\ell}(s,i,j)\, .$$

\begin{theoreme}[Dual recursive formula]\label{thmmain2} For all $(x,Q,s)\in \RR^K \times \De(L)^K\times S$ and $\theta\in \De(\NN^*)$ one has:
\begin{eqnarray*}\label{eee3}
w_{\theta}(x,Q,s)&=&\min_{\tau\in \De(J)^L} \min_{(x_{ijs})_{ij}\in (\RR^K)^{I\times J\times S}} \max_{(k,i)\in K \times I} \\ & & \left\{ 
\theta_1 G^{kQ}_{i \tau}(s)+(1-\theta_1)\sum_{(s',j)\in S\times J}\PP^{kQ}_{\tau}(j)\rho(s'|s,i,j)\left( w_{\theta^+}(x_{ijs},Q_j,s')
+ x_{ijs}^k \right) -x^k \right\}.
\end{eqnarray*}

\end{theoreme}

\begin{corollaire}\label{cormain2}Player $2$ can construct an optimal strategy in $\G_\theta(\pi;s_1)$ by using the \emph{dual recursive formula}, starting from an appropriate pair $(x,Q)$, namely $Q$ is the matrix of conditionals corresponding to $\pi$ and $x$ belongs to the sub-differential of $p'\mapsto v_\theta(p'\otimes Q,s)$ at $p$ such that $\pi=p\otimes Q$.  
\end{corollaire}

\subsection{Differential games with incomplete information}

Differential games with incomplete information were introduced by Cardaliaguet \cite{carda07}. As in repeated games with incomplete information, 
before the game starts, a pair of parameters $(k,\ell)$ is drawn according to some commonly known probability distribution $\pi$ on $K\times L$.
  Player $1$ is informed of $k$ and Player 2 of $\ell$.
Then, a differential game is played in which the dynamic and the payoff function depend on both types: each player is thus partially informed about the differential game that is played. The existence and characterisation of the value function was established by Cardaliaguet \cite{carda07} in the independent case, and extended to the general case by Oliu-Barton \cite{OBjeuxdiffs15}. The proof relies on the geometry of the value function ($I$-concavity and $\two$-convexity) and on a sub-dynamic programming principle satisfied by its Fenchel conjugates (i.e. the values of the first and the second dual games). 

Though useful for establishing the existence of the value for differential games with incomplete information, the sub-dynamic programming principles satisfied by the values of the dual games do not yield a construction of optimal strategies for these games, which remains an open problem. Establishing a continuous-time analogue of the dual recursive formula (i.e. Theorem \ref{thmmain}) would be a natural way to solve it.

\section{Acknowledgements}
The authors are indebted to Sylvain Sorin and Bernard De Meyer for their insight and suggestions. 
The authors are also thankful to Mario Bravo for his comments.
The first author gratefully acknowledges support from the  Artificial and Natural
Intelligence Toulouse Institute under Grant ANR-3IA, and funding from the French National Research Agency(ANR), under the Investments for the Future (Investissements d'Avenir) program under grant ANR-17-EURE-0010.
The second author gratefully acknowledges funding from the French National Research Agency (ANR), under grant ANR CIGNE (ANR-15-CE38-0007-01).

\bibliographystyle{amsplain}
\bibliography{biblio_bothsides}

\end{document}